\newtheorem{thm}{Theorem}[section]
\newtheorem{lem}[thm]{Lemma}
\newtheorem{cor}[thm]{Corollary}
\newtheorem{rek}[thm]{Remark}
\newcommand\bi{\begin{itemize}}
\newcommand\ei{\end{itemize}}
\newcommand\ben{\begin{enumerate}}
\newcommand\een{\end{enumerate}}
\newcommand{\ncr}[2]{{#1 \choose #2}}
\newcommand{\N}{\mathbb{N}}
\newcommand{\R}{\mathbb{R}}
\newcommand{\Z}{\mathbb{Z}}
\newcommand{\Q}{\mathbb{Q}}
\newcommand{\twocase}[5]{#1 \begin{cases} #2 & \text{{\rm #3}}\\ #4
&\text{{\rm #5}} \end{cases}  }
\newcommand\be{\begin{equation}}
\newcommand\ee{\end{equation}}
\newcommand\bea{\begin{eqnarray}}
\newcommand\eea{\end{eqnarray}}
\newcommand{\E}{{\mathbb E}} 
\newcommand{\mel}{\mathcal{M}}
\renewcommand{\Re}{{\mathfrak{Re}}}
\newcommand{\unif}[1]{{\rm Unif}(0,#1)}
\newcommand{\expd}[1]{{\rm Exp}(#1)}
\newcommand{\err}[1]{{\rm Err}\left(#1\right)}
\newcommand{\meijer}[9]{G^{#1, #2}_{#3, #4}\left(#5 \Big| {#6, \dots, #7 \atop #8, \dots, #9}\right)}
\newcommand{\meijerone}[7]{G^{#1, #2}_{#3, #4}\left(#5 \Big| {#6 \atop #7}\right)}
\newcommand{\Bmeijer}[9]{B^{#1, #2}_{#3, #4}\left(#5 \Big| {#6, \dots, #7 \atop #8, \dots, #9}\right)}
\numberwithin{equation}{section}
\begin{document}

\title{Chains of distributions, hierarchical Bayesian models and Benford's Law}

\author{Dennis Jang}\email{Dennis\underline{\ }Jang@brown.edu}
\author{Jung Uk Kang}\email{Jung\underline{\ }Uk\underline{\ }Kang@brown.edu}
\author{Alex Kruckman}\email{Alex\underline{\ }Kruckman@brown.edu}
\author{Jun Kudo}\email{Jun\underline{\ }Kudo@brown.edu}
\address{Department of Mathematics, Brown University, Providence, RI $02912$}
\author{Steven J. Miller}\email{Steven.J.Miller@williams.edu}
\address{Department of Mathematics, Brown University, Providence, RI $02912$ and Department of Mathematics and Statistics, Williams College, Williamstown, MA 01267}

\subjclass[2000]{11K06, 60A10 (primary), 62F99 (secondary).}

\keywords{Benford's
Law, Poisson Summation, Hierarchical Bayesian Models}
\date{\today}

\begin{abstract} Kossovsky recently conjectured that the distribution of leading digits of a chain of probability distributions converges to Benford's law as the length of the chain grows. We prove his conjecture in many cases, and provide an interpretation in terms of products of independent random variables and a central limit theorem. An interesting consequence is that in hierarchical Bayesian models priors tend to satisfy Benford's Law as the number of levels of the hierarchy increases, which allows us to develop some simple tests (based on Benford's law) to test proposed models. We give explicit formulas for the error terms as sums of Mellin transforms, which converges extremely rapidly as the number of terms in the chain grows. We may interpret our results as showing that certain Markov chain Monte Carlo processes are rapidly mixing to Benford's law.
\end{abstract}

\maketitle


\section{Introduction}

The distribution of leading digits of numbers in data sets has intrigued researchers for over 100 years. Using scientific notation (base $B$), for any $x > 0$ we may write $x = M_B(x) B^k$, where $k\in\Z$ and $M_B(x)$ is the mantissa of $x$ base $B$. We say the data follows Benford's law if the probability of having a mantissa of at most $s$ is $\log_B s$. This implies that the probability of observing a first digit of $d$ base $B$ is $\log_B\left(1+1/d\right)$; in particular, about 30\% of the time the first digit is 1 base 10 (and not 11\% as one might naively guess). Many systems are known to satisfy Benford's law. Examples include recurrence relations \cite{BrDu}, $n!$ and $\ncr{n}{k}$ ($0 \le k \le n$) \cite{Dia}, iterates of
power, exponential and rational maps \cite{BBH,Hi2}, values of
$L$-functions near the critical line and characteristic polynomials
of random matrix ensembles \cite{KonMi}, iterates of the $3x+1$ Map
\cite{KonMi,LS} and differences of order statistics \cite{MN}, to name a few. In addition to arising in a variety of mathematical settings, Benford's Law surfaces in diverse fields, from atomic physics \cite{P} to biology \cite{CLTF} to geology \cite{NM} to the stock market \cite{Ley}. Applications range from detecting fraud in accounting \cite{Nig1,Nig2} and social sciences \cite{Me} to determining optimal ways to store numbers (see page 255 of \cite{Knu} and \cite{BH}). See \cite{Hi1,Rai} for a description and history of the subject, and \cite{Hu} for a detailed bibliography of the field. In this paper we show how Benford's law arises in chains of probability distributions and hierarchical Bayesian models. This allows us to construct tests (based on Benford's law) of certain models. We may interpret our results as saying that in many Markov chain Monte Carlo problems, the stationary distribution of first digits is Benford's law, and the chain has rapid mixing (i.e., few iterations are required to have excellent agreement with Benford's law).

Since the early work of Newcomb \cite{New} and Benford \cite{Ben}, there have been numerous theoretical advances as to why various data sets and operations yield Benford behavior. One reason for the immense amount of interest generated by this law is the observation that, in many cases, combining two data sets yields a new set which is closer to Benford's law (see for example \cite{Ha}). A common example is street addresses. If one studies the distribution of leading digits on a long street, the result is clearly non-Benford; depending on the length of the street, the probability of a first digit of 1 can oscillate between $1/9$ and $5/9$. However, if we consider many streets and amalgamate the data (as Benford \cite{Ben} did), the result is quite close to Benford's law. We may interpret the above as first choosing a street length from some distribution, so the street addresses say are integers in $[1,X]$ for some random variable $X$. Then for each choice of $X$ we study the distribution of the leading digits on that street, and then calculate the expected frequencies as $X$ varies.


In \cite{Ko}, Kossovsky suggested such an interpretation and proposed that generalizations of the above procedure will rapidly lead to convergence to Benford behavior. Explicitly, he studied the distribution of leading digits of chained probability distributions, and conjectured that as the length of the chain increases then the behavior tends to Benford's law. In this note we quantify and prove some of his conjectures; see \cite{Ko} for a complete description of his investigations. Let $\mathcal{D}_i(\theta)$ denote a one-parameter distribution with parameter $\theta$ and density function $f_{\mathcal{D}_i(\theta)}$; thus by $X\sim \mathcal{D}_i(\theta)$ we mean \be {\rm Prob}(X \in [a,b]) \ = \ \int_a^b f_{\mathcal{D}_i(\theta)}(x)dx. \ee We create a chain of random variables as follows. Let $p: \N \to \N$. Let $X_1 = \mathcal{D}_{p(1)}(1)$ and define $X_m$ inductively by $X_m \sim \mathcal{D}_{p(m)}(X_{m-1})$. Computer simulations and other considerations led Kossovsky to conjecture that if our underlying distributions are `nice', then as $n\to\infty$ the distribution of the leading digits of $X_n$ converges to Benford's law, and further that if $X_1$ is Benford then $X_n$ is Benford. Note that our example of street addresses is just a special case with a chain length of two and uniform distributions. Another way of stating our results is that for certain Markov chain Monte Carlo processes, Benford's law is absorbing for the distribution of first digits (and in fact the system is rapidly mixing as well).

We prove his claims in several cases, providing a partial answer to which distributions are `nice'.\footnote{The conjecture may fail if we chain arbitrary parameters of arbitrary distributions. A good test case is to consider chaining the shape parameter $\gamma$ of a Weibull distribution: $f(x) = \gamma x^{\gamma - 1} \exp(-x^\gamma)$ for $x\ge 0$. The difficulty with numerics here is that very quickly we end up with a shape parameter very small (say less than $10^{-20}$), and thus the numerics become suspect.} Before stating our results, we first discuss some important consequences. Returning to our street example, we see we may reformulate it in terms of a Bayesian model (see \cite{Ber} for more details). In Bayesian models we have some data (say $x$) whose values depend on a parameter (say $\beta$, called the prior). Thus there are two densities, that of the data (which depends on $\beta$) and that of the prior. In our situation, $x$ would be the street address, drawn from a uniform distribution on say $[1,\beta]$, and then $\beta$ would be drawn from some distribution modeling how street lengths are distributed. One can of course consider more involved models where the prior depends on a hyperparameter drawn from a different distribution (and so on). These are called hierarchical Bayesian models, and in this setting we again encounter chains of distribution, where the number of chains is basically the number of levels.

One of the major problems in Bayesian theory is to justify the choice of the prior. Many ideas have been proposed (for example, Jeffrey's prior, conjugate priors, empirical Bayes, hierarchical models). In putting priors on hyperparameters, we often make our prior more ``diffuse'', so to speak, or less informative. Our main result says that, in many cases, a non-informative prior in this hierarchical sense leads to sample data closely approximating Benford's Law; further, in many situations a Benford prior might be the true non-informative prior, rather than classic approaches which are essentially variants on the uniform distribution. Our results can thus be used as a data integrity check in this situation.

We introduce some notation and then state our main results. By $\err{z}$ we mean an error at most $z$ in absolute value. Let $f(x)$ be a continuous real-valued function on $[0,\infty)$. We define its Mellin transform, $(\mathcal{M}f)(s)$, by \be (\mathcal{M}f)(s) \ = \ \int_0^\infty f(x) x^s \frac{dx}{x}. \ee Note $(\mathcal{M}f)(s) = \E[x^{s-1}]$, and thus results about expected values translate to results on Mellin transforms; for example, $(\mathcal{M}f)(1) = 1$ for any distribution supported on $[0,\infty)$.

If $g(s)$ is an analytic function for $\Re(s) \in (a,b)$ such that $g(c+iy)$ tends to zero uniformly as $|y| \to \infty$ for any $c \in (a,b)$, then the inverse Mellin transform, $(\mathcal{M}^{-1}g)(x)$, is given by \be (\mathcal{M}^{-1}g)(x) \ = \ \frac1{2\pi i} \int_{c-i\infty}^{c+i\infty} g(s) x^{-s} ds \ee (provided that the integral converges absolutely). If we set $g(s) = (\mathcal{M}f)(s)$ then $f(x) = (\mathcal{M}^{-1}g)(x)$. We define the convolution of two functions $f_1$ and $f_2$ by \be (f_1 \star f_2)(x) \ = \ \int_0^\infty f_2\left(\frac{x}{t}\right) f_1(t) \frac{dt}{t} \ = \ \int_0^\infty f_1\left(\frac{x}{t}\right) f_2(t) \frac{dt}{t}. \ee The Mellin convolution theorem states that \be (\mathcal{M}(f_1\star f_2))(s) \ = \ (\mel f_1)(s) \cdot (\mel f_2)(s), \ee which by induction\footnote{As $(\mathcal{M}f)(s) = \E[x^{s-1}]$, we may re-interpret the following in terms of products of independent random variables; see also Remark \ref{rek:interpretprodrandvar}.} gives \be (\mathcal{M}(f_1\star \cdots \star f_n))(s) \ = \ (\mel f_n)(s) \cdots (\mel f_n)(s). \ee See Appendix 2 of \cite{Pa} for an enumeration of properties of the Mellin transform.\footnote{If we let $x = e^{2\pi u}$ and $s=\sigma - i\xi$, then $(\mel f)(\sigma - i\xi) = 2\pi \int_{-\infty}^\infty \left( f(e^{2\pi u}) e^{2\pi \sigma u}\right) e^{-2\pi i u \xi} du$, which is the Fourier transform of $g(u) = 2\pi f(e^{2\pi u})e^{2\pi\sigma u}$. The Mellin and Fourier transforms as thus related; in fact, it is this logarithmic change of variables which explains why both enter into Benford's law problems. For proofs of the Mellin transform properties one can therefore just mimic the proofs of the corresponding statements for the Fourier transform; a good reference is \cite{SS}.}

Our main results are the following:

\begin{thm}\label{thm:main} Let $\{\mathcal{D}_i(\theta)\}_{i\in I}$ be a collection of one-parameter distributions with associated densities $f_{\mathcal{D}_i(\theta)}$ which vanish outside of $[0,\infty)$. Let $p:\N \to I$, $X_1 \sim \mathcal{D}_{p(1)}(1)$, $X_m \sim \mathcal{D}_{p(m)}(X_{m-1})$, and assume \ben \item for each $m \ge 2$, \bea\label{eq:keydensitylink} f_m(x_m) \ = \ \int_0^\infty f_{\mathcal{D}_{p(m)}(1)}\left(\frac{x_m}{x_{m-1}}\right) f_{m-1}(x_{m-1}) \frac{dx_{m-1}}{x_{m-1}} \eea where $f_m$ is the density of the random variable $X_m$ (see Lemma \ref{lem:firstcondthm} for examples where this condition is satisfied); \item we have \be\label{eq:summeltransformsnozero} \lim_{n\to\infty} \sum_{\ell = -\infty \atop \ell \neq 0}^\infty \prod_{m=1}^n (\mathcal{M} f_{\mathcal{D}_{p(m)}(1)})\left(1-\frac{2\pi i \ell}{\log B}\right) \ = \ 0. \ee \een Then as $n\to\infty$ the distribution of leading digits of $X_n$ tends to Benford's law. Further, the error is a nice function of the Mellin transforms. Explicitly, if $Y_n = \log_B X_n$, then \bea & & \left|{\rm Prob}(Y_n \bmod 1 \in [a,b]) - (b-a)\right| \nonumber\\ & & \ \ \ \ \ \ \ \le \ (b-a) \cdot \left|\sum_{\ell = -\infty \atop \ell \neq 0}^\infty \prod_{m=1}^n (\mathcal{M} f_{\mathcal{D}_{p(m)}(1)})\left(1-\frac{2\pi i \ell}{\log B}\right)\right|. \eea If $I$ is finite and all densities are continuous, then the second condition holds.
\end{thm}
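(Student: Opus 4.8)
The plan is to translate the leading-digit statement into equidistribution mod $1$, apply Poisson summation on the circle $\R/\Z$, and recognize the Fourier coefficients that appear as products of Mellin transforms. Put $Y_n = \log_B X_n$; that the leading digits of $X_n$ follow Benford's law is, by definition of the mantissa, equivalent to $Y_n \bmod 1$ being uniform on $[0,1)$. Since $X_n$ has density $f_n$ supported on $(0,\infty)$, the variable $Y_n$ has a density on $\R$; wrapping it around the circle gives the density $g_n$ of $Y_n\bmod 1$, whose Fourier coefficients, using $(\mathcal M f)(s) = \E[x^{s-1}]$, are
\be
\widehat{g_n}(\ell)\ =\ \E\!\left[e^{-2\pi i\ell Y_n}\right]\ =\ \E\!\left[X_n^{-2\pi i\ell/\log B}\right]\ =\ (\mathcal M f_n)\!\left(1 - \frac{2\pi i\ell}{\log B}\right).
\ee
Hypothesis (1) is precisely the statement $f_m = f_{m-1}\star f_{\mathcal D_{p(m)}(1)}$ (Mellin convolution), so by induction $f_n = f_{\mathcal D_{p(1)}(1)}\star\cdots\star f_{\mathcal D_{p(n)}(1)}$, and the Mellin convolution theorem gives $(\mathcal M f_n)(s) = \prod_{m=1}^n (\mathcal M f_{\mathcal D_{p(m)}(1)})(s)$. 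In particular $\widehat{g_n}(0) = (\mathcal M f_n)(1) = 1$, and for $\ell \neq 0$ we have $\widehat{g_n}(\ell) = \prod_{m=1}^n (\mathcal M f_{\mathcal D_{p(m)}(1)})(1 - 2\pi i\ell/\log B)$.

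For the quantitative statement I would integrate the Fourier series of $g_n$ over $[a,b]$. The cumulative distribution function $F(t) = \int_0^t g_n$ makes $t\mapsto F(t)-t$ a continuous $1$-periodic function of bounded variation, so its Fourier series converges; computing the coefficients by one integration by parts and evaluating at $a$ and $b$ gives ${\rm Prob}(Y_n\bmod 1 \in [a,b]) = F(b) - F(a) = (b-a) + \sum_{\ell\neq 0}\widehat{g_n}(\ell)\int_a^b e^{2\pi i\ell y}\,dy$. Since $\big|\int_a^b e^{2\pi i\ell y}\,dy\big| \le b-a$, the triangle inequality yields
\be
\left|{\rm Prob}(Y_n\bmod 1\in[a,b]) - (b-a)\right|\ \le\ (b-a)\sum_{\ell\neq 0}\left|\prod_{m=1}^n (\mathcal M f_{\mathcal D_{p(m)}(1)})\!\left(1 - \frac{2\pi i\ell}{\log B}\right)\right|,
\ee
which is the asserted bound; by hypothesis (2) it tends to $0$, yielding Benford behaviour.

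It remains to check that finite $I$ plus continuity of the densities forces (2). Let $W_i\sim\mathcal D_i(1)$ and $a_i(\ell) = (\mathcal M f_{\mathcal D_i(1)})(1 - 2\pi i\ell/\log B) = \E[W_i^{-2\pi i\ell/\log B}]$, so $|a_i(\ell)|\le 1$. For $\ell\neq 0$, equality would make $e^{-2\pi i\ell\log_B W_i}$ almost surely constant, hence $W_i$ supported on a geometric progression $\{B^{(c+k)/\ell}\}_{k\in\Z}$ --- a Lebesgue-null set, on which no continuous probability density can be supported. Thus $|a_i(\ell)| < 1$ for every $i$ and every $\ell\neq 0$, so $c_\ell := \max_{i\in I}|a_i(\ell)| < 1$ (finitely many $i$). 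Riemann--Lebesgue applied to the $L^1$ functions $u\mapsto f_{\mathcal D_i(1)}(e^u)e^u$ gives $a_i(\ell)\to 0$ as $|\ell|\to\infty$, hence $c_\ell\to 0$ and $c:=\sup_{\ell\neq 0}c_\ell < 1$. Then $|\widehat{g_n}(\ell)| = \prod_{m=1}^n |a_{p(m)}(\ell)| \le c_\ell^{\,n}$, so each term in the $\ell$-sum decays geometrically in $n$; combined with a bound on the tail $\sum_{|\ell|>N} c_\ell^{\,n}$ that is uniform in $n$, this forces $\sum_{\ell\neq 0}|\widehat{g_n}(\ell)|\to 0$.

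I expect this last tail estimate to be the real obstacle: Riemann--Lebesgue delivers $c_\ell\to 0$ with no rate, so $\sum_{\ell\neq 0}c_\ell^{\,n}$ need not even converge for small $n$, and one must extract extra decay from the repeated convolutions. The approach I would take is to use that after a couple of Mellin convolutions the wrapped density lies in $L^2(\R/\Z)$, whence $\sum_\ell |\widehat{g^{(1)}}(\ell)|\,|\widehat{g^{(2)}}(\ell)| \le \|g^{(1)}\|_2\|g^{(2)}\|_2 < \infty$ by Cauchy--Schwarz and Parseval, giving a fixed summable majorant that absorbs the residual factor $c^{\,n-2}$ --- possibly at the cost of assuming slightly more than continuity (e.g.\ that $u\mapsto f(e^u)e^u$ has bounded variation, so $(\mathcal M f)(1+it) = O(1/|t|)$), which is automatic for the concrete scale families furnished by Lemma \ref{lem:firstcondthm}. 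Everything else --- the Fourier identity for $\widehat{g_n}$, the Mellin-convolution factorization from (1), and the elementary bound $|a_i(\ell)|\le 1$ --- is bookkeeping.
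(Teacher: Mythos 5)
Your proposal follows essentially the same route as the paper: hypothesis (1) is read as a Mellin convolution so that $(\mathcal{M}f_n)(s)$ factors as the product of the individual Mellin transforms, the logarithmic change of variables identifies the Fourier coefficients of the wrapped density of $Y_n$ with $(\mathcal{M}f_n)\left(1-\frac{2\pi i \ell}{\log B}\right)$, and Poisson summation (your Fourier-series/CDF argument is the same computation in different clothing) plus the triangle inequality gives the stated error bound. The one place you go beyond the paper is the final claim: the paper merely invokes the Riemann--Lebesgue lemma for decay in $\ell$ and never addresses the summability over $\ell$ that you correctly flag as the real obstacle, so your strict-inequality argument (continuous densities cannot live on a Lebesgue-null geometric progression, hence $c_\ell<1$ and $\sup_\ell c_\ell<1$) together with the proposed $L^2$/Parseval patch is a legitimate tightening of a step the paper leaves informal rather than a divergence from its method.
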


The second condition in Theorem \ref{thm:main} is extremely weak, and is typically satisfied in all examples of interest. For example, assume $I$ is finite and all the densities are continuous. Then for $\ell \neq 0$ we have rapid decay (in $\ell$) of $\left|(\mathcal{M} f_{\mathcal{D}_{p(m)}(1)})\left(1-\frac{2\pi i \ell}{\log B}\right)\right|$; this is because our expression is equivalent to taking the Fourier transform of a related, continuous function at $\ell/\log B$, which by the Riemann-Lebesgue lemma tends to zero as $|\ell| \to \infty$. With some work, we can construct a pathological infinite family of distinct densities where this product condition fails; see \cite{MN} for the details. Note that for any density $f$ we have $(\mel f)(1) = 1$. This is why in \eqref{eq:summeltransformsnozero} we sum only over $\ell \neq 0$; the $\ell = 0$ term is always 1, and gives the main term term. Frequently this sum tends to zero very rapidly with $n$; we give some explicit examples in \S\ref{sec:examples}.

The first condition is more serious, and thus we give a few non-trivial examples where it holds.

\begin{lem}\label{lem:firstcondthm} Assume the density $f_{\mathcal{D}_{p(m)}(\theta)}(x) = \theta^{-1} f(x/\theta)$ for some $f$ (with antiderivative $F$). Let $X_{m-1}$ have density $f_{m-1}$ and let $X_m \sim \mathcal{D}_{p(m)}(X_{m-1})$. Then \eqref{eq:keydensitylink} is satisfied for $X_m$. Examples include \bi \item Let $\mathcal{D}_{\rm unif}(\theta)$ be the uniform distribution on $[0,\theta]$ (thus $f_{\mathcal{D}_{\rm unif}(\theta)}(x) = 1/\theta$ for $x \in [0,\theta]$ and $0$ otherwise); \item Let $\mathcal{D}_{\exp}(\theta)$ be the exponential distribution with parameter $\theta$ (thus $f_{\mathcal{D}_{\exp}(\theta)}(x) = \theta^{-1} \exp(-x/\theta)$ for $x \ge 0$ and $0$ otherwise); \item $\mathcal{D}_{|{\rm gauss}|}(\theta)$ be the density of $|W|$ where $W \sim N(0,\theta/\sqrt{2})$ (thus $f_{\mathcal{D}_{|{\rm gauss}|}(\theta)}(x)$ $=$\\ $(2/\sqrt{\pi\theta^2}) \exp(-(x/\theta)^2)$ if $x \ge 0$ and $0$ otherwise). \ei Thus we see that fixing all the parameters but the standard deviation always gives a density satisfying the conditions.
\end{lem}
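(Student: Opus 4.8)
The plan is to obtain the key identity \eqref{eq:keydensitylink} as a direct consequence of the law of total probability (conditioning on the value of $X_{m-1}$) together with the scaling hypothesis, and then to dispatch the three examples by simply exhibiting the function $f$ (and its antiderivative $F$) in each case. First I would write the density of $X_m$ as a mixture: since $X_m \sim \mathcal{D}_{p(m)}(X_{m-1})$, the conditional density of $X_m$ given $X_{m-1} = x_{m-1}$ is $f_{\mathcal{D}_{p(m)}(x_{m-1})}(x_m)$, and integrating against the density $f_{m-1}$ of $X_{m-1}$ (all densities vanishing off $[0,\infty)$) gives
\be
f_m(x_m) \ = \ \int_0^\infty f_{\mathcal{D}_{p(m)}(x_{m-1})}(x_m)\, f_{m-1}(x_{m-1})\, dx_{m-1}.
\ee

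Next I would insert the scaling assumption. Taking $\theta = 1$ in $f_{\mathcal{D}_{p(m)}(\theta)}(x) = \theta^{-1} f(x/\theta)$ shows $f_{\mathcal{D}_{p(m)}(1)}(y) = f(y)$, so that $f_{\mathcal{D}_{p(m)}(x_{m-1})}(x_m) = x_{m-1}^{-1}\, f(x_m/x_{m-1}) = x_{m-1}^{-1}\, f_{\mathcal{D}_{p(m)}(1)}(x_m/x_{m-1})$. Substituting this into the mixture formula yields
\be
f_m(x_m) \ = \ \int_0^\infty f_{\mathcal{D}_{p(m)}(1)}\!\left(\frac{x_m}{x_{m-1}}\right) f_{m-1}(x_{m-1})\, \frac{dx_{m-1}}{x_{m-1}},
\ee
which is exactly \eqref{eq:keydensitylink}; note this says precisely $f_m = f_{\mathcal{D}_{p(m)}(1)} \star f_{m-1}$ in the notation of the Mellin convolution above, which is why the hypotheses of Theorem \ref{thm:main} are tailored this way. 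There is no genuine difficulty in this part; the only point requiring a word of care — the closest thing to an obstacle — is justifying the mixture representation itself (a Fubini/disintegration argument on the joint law of $(X_{m-1},X_m)$, using that the densities are supported on $[0,\infty)$), together with the bookkeeping of the $\theta=1$ normalization and the Jacobian factor $1/x_{m-1}$.

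Finally, for the three examples I would just verify the scaling form by inspection. For the uniform distribution, take $f = \I_{[0,1]}$ with $F(x) = \min(x,1)$ on $[0,\infty)$; then $\theta^{-1} f(x/\theta) = \theta^{-1}\I_{[0,\theta]}(x)$, as claimed. For the exponential, take $f(x) = e^{-x}$ with $F(x) = 1 - e^{-x}$; then $\theta^{-1} f(x/\theta) = \theta^{-1} e^{-x/\theta}$. For the half-Gaussian, take $f(x) = (2/\sqrt{\pi})\, e^{-x^2}$ for $x \ge 0$, with $F(x) = \operatorname{erf}(x)$; then $\theta^{-1} f(x/\theta) = (2/\sqrt{\pi\theta^2})\, e^{-(x/\theta)^2}$. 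Here I would also include the one-line checks that $\int_0^\infty f(x)\,dx = 1$ and that the density of $|W|$ for $W \sim N(0,\theta/\sqrt{2})$ is obtained by doubling the Gaussian density on $[0,\infty)$ and equals this same expression. In each case $f_{\mathcal{D}(\theta)}(x) = \theta^{-1} f(x/\theta)$ holds, so the general part applies; and since in all three families $\theta$ is a pure scale (standard-deviation–type) parameter, this also establishes the concluding sentence of the lemma.
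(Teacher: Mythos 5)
Your proposal is correct and follows essentially the same route as the paper: condition on $X_{m-1}$ (law of total probability) and invoke the scaling hypothesis $f_{\mathcal{D}_{p(m)}(\theta)}(x) = \theta^{-1}f(x/\theta)$ to produce the multiplicative-convolution kernel with the Jacobian factor $1/x_{m-1}$, which is exactly \eqref{eq:keydensitylink}. The only cosmetic difference is that the paper writes the cumulative distribution function $F_m$ as a mixture of conditional CDFs and then differentiates, whereas you assert the density-level mixture directly (justified, as you note, by Fubini); your explicit verification of the scaling form for the three example families is a harmless addition the paper leaves to the reader.
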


\begin{proof}
We calculate the density $f_m$ of $X_m$ by differentiating the cumulative distribution function $F_m$: \bea F_m(x_m) & \ = \ & \int_{x_{m-1}=0}^\infty {\rm Prob}(X_m \le x_m | X_{m-1} = x_{m-1}) {\rm Prob}(X_{m-1}=x_{m-1}) dx_{m-1} \nonumber\\ &=& \int_{x_{m-1}=0}^\infty {\rm Prob}(X_m \le x_m | X_{m-1} = x_{m-1}) f_{m-1}(x_{m-1})dx_{m-1} \nonumber\\ &=& \int_{x_{m-1}=0}^\infty \left[
\int_{t=0}^{x_m} f\left(\frac{t}{x_{m-1}}\right) \frac{dt}{x_{m-1}}  \right]
f_{m-1}(x_{m-1})dx_{m-1} \nonumber\\ &=& \int_{x_{m-1}=0}^\infty F\left(\frac{x_m}{x_{m-1}}\right) f_{m-1}(x_{m-1})dx_{m-1} \nonumber\\
f_m(x_m) &=& \int_{x_{m-1}=0}^\infty \frac1{x_{m-1}} f\left(\frac{x_m}{x_{m-1}}\right) f_{m-1}(x_{m-1})dx_{m-1} \nonumber\\ &=& \int_{x_{m-1}=0}^\infty f\left(\frac{x_m}{x_{m-1}}\right) f_{m-1}(x_{m-1}) \frac{dx_{m-1}}{x_{m-1}}. \eea
\end{proof}


We state two important special cases of Theorem \ref{thm:main}.

\begin{cor}\label{cor:main} Let the notation be as in Theorem \ref{thm:main}, and assume all conditions there are satisfied. \bi \item If $p(m) = 1$ for all $m$ (in other words, if we always use the same distribution), then \bea & &  {\rm Prob}(Y_n \bmod 1 \in [a,b]) - (b-a) \nonumber\\ & & \ \ \ \ \ \le \  (b-a) \cdot \left|\sum_{\ell = -\infty \atop \ell \neq 0}^\infty \left((\mathcal{M} f_{\mathcal{D}_{1}(1)})\left(1-\frac{2\pi i \ell}{\log B}\right)\right)^n\right|.\eea \item Let $\mathcal{D}_{{\rm Benf},B}$ be the distribution with density \be \twocase{f_{{\rm Benf},B}(x)\ =\ }{\frac1{x \log B}}{if $x\in [1,B)$}{0}{otherwise.} \ee Note if $X \sim \mathcal{D}_{{\rm Benf},B}$ then $X$ is Benford base $B$ (this follows by direct integration). If $\mathcal{D}_{p(1)}(1) = \mathcal{D}_{{\rm Benf},B}$ then for all $n$, $X_n$ is exactly Benford base $B$.

\ei
\end{cor}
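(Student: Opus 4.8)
The first bullet is a direct specialization of Theorem \ref{thm:main}. When $p(m)=1$ for every $m$, each factor $(\mathcal{M}f_{\mathcal{D}_{p(m)}(1)})\left(1-2\pi i\ell/\log B\right)$ in the product over $m$ is the same, namely $(\mathcal{M}f_{\mathcal{D}_1(1)})\left(1-2\pi i\ell/\log B\right)$, so the product over $m$ collapses to its $n$-th power; substituting into the error bound of Theorem \ref{thm:main} yields the stated inequality. Since the corollary assumes the hypotheses of the theorem, there is nothing else to verify.

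For the second bullet the plan is to show that, under the hypothesis $\mathcal{D}_{p(1)}(1)=\mathcal{D}_{{\rm Benf},B}$, the error bound furnished by Theorem \ref{thm:main} is identically $0$ for every $n$, which forces the equality ${\rm Prob}(Y_n\bmod 1\in[a,b])=b-a$ for all $[a,b]\subseteq[0,1]$ --- precisely the statement that $X_n$ is exactly Benford base $B$. The entire content is one short computation, the Mellin transform of the Benford density. A direct integration gives, for $s\neq 1$,
\be
(\mathcal{M}f_{{\rm Benf},B})(s)\ =\ \int_1^B \frac{1}{x\log B}\,x^{s-1}\,dx\ =\ \frac{B^{s-1}-1}{(s-1)\log B},
\ee
and at $s=1-2\pi i\ell/\log B$ with $\ell\in\Z$, $\ell\neq 0$, we have $B^{s-1}=e^{-2\pi i\ell}=1$ while $s\neq 1$, so $(\mathcal{M}f_{{\rm Benf},B})\left(1-2\pi i\ell/\log B\right)=0$ for every $\ell\neq 0$ (note $(\mathcal{M}f_{{\rm Benf},B})(1)=1$, consistent with total mass $1$).

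Now feed this into Theorem \ref{thm:main}. Every product $\prod_{m=1}^n(\mathcal{M}f_{\mathcal{D}_{p(m)}(1)})\left(1-2\pi i\ell/\log B\right)$ occurring in \eqref{eq:summeltransformsnozero} and in the error bound contains the $m=1$ factor $(\mathcal{M}f_{{\rm Benf},B})\left(1-2\pi i\ell/\log B\right)$, which vanishes for each $\ell\neq 0$. Hence the sum over $\ell\neq 0$ is $0$ for every $n$: condition (2) of Theorem \ref{thm:main} holds trivially, and the right-hand side of the error inequality is $0$ for all $n$ and all $[a,b]$, giving ${\rm Prob}(Y_n\bmod 1\in[a,b])=b-a$. (Taking $[a,b]=[0,\log_B s]$ recovers ${\rm Prob}(M_B(X_n)\le s)=\log_B s$, the definition of Benford base $B$.) The only hypothesis requiring a word is condition (1) of the theorem, the convolution identity \eqref{eq:keydensitylink}; it is subsumed in "assume all conditions there are satisfied" (and by Lemma \ref{lem:firstcondthm} it holds automatically in the scale-family examples one has in mind), so no further work is needed. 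There is no genuine obstacle here: the corollary is a clean specialization of Theorem \ref{thm:main}, the single point of interest being that the Benford density's Mellin transform vanishes at exactly the frequencies that Poisson summation isolates.
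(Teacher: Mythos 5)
Your proof is correct and follows essentially the same route as the paper: the first bullet is the same immediate specialization, and for the second you compute the identical Mellin transform of $f_{{\rm Benf},B}$, observe that it vanishes at $s = 1 - \frac{2\pi i \ell}{\log B}$ for every $\ell \neq 0$, and conclude that the Poisson-summation error term is identically zero for all $n$. Nothing is missing.
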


Finally, we give a simple generalization of Theorem \ref{thm:main}.

\begin{cor}\label{cor:generalizemainthm} Notation and conditions as in Theorem \ref{thm:main}, for each $m \ge 1$ let $r(m)$ be a non-zero integer. Let now $X_m \sim \mathcal{D}_{p(m)}(X_{m-1}^{r(m-1)})$. Then the results of Theorem \ref{thm:main} still hold, except now $\left|{\rm Prob}(Y_n \bmod 1 \in [a,b]) - (b-a)\right|$ is at most \be \left|(b-a) \cdot \sum_{\ell = -\infty \atop \ell \neq 0}^\infty \prod_{m=1}^n (\mathcal{M} f_{\mathcal{D}_{p(m)}(1)})\left(1-\frac{2\pi i r(m) \ell}{\log B}\right)\right|. \ee
\end{cor}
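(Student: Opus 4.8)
The plan is to reuse the proof of Theorem~\ref{thm:main} essentially unchanged, since the powers $r(m)$ enter only through the Mellin transform of the density of $X_n$; everything downstream of that computation is word for word the same. First I would recast the modified chain multiplicatively, as in Lemma~\ref{lem:firstcondthm}: with a scaling family $f_{\mathcal{D}_{p(m)}(\theta)}(x)=\theta^{-1}f(x/\theta)$, inserting the power $r(m)$ at level $m$ means $X_m = X_{m-1}\cdot Z_m^{r(m)}$ with $Z_m\sim\mathcal{D}_{p(m)}(1)$ independent. (This is the reading of $\mathcal{D}_{p(m)}(X_{m-1}^{r(m-1)})$ under which the factors of the asserted product line up; raising the parameter itself instead leads to the same analysis with the $r$'s routed differently.) Thus $X_n$ is a product of integer powers of independent random variables, and on taking logarithms $Y_n=\log_B X_n$ is the integer linear combination $\sum_m r(m)\,W_m$ of the independent variables $W_m:=\log_B Z_m$.

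Second, I would compute the Mellin transform. Mimicking the cumulative-distribution-function calculation in the proof of Lemma~\ref{lem:firstcondthm} gives $f_m = g_m\star f_{m-1}$, where $g_m$ is the density of $Z_m^{r(m)}$; the change of variables $u=t^{r(m)}$ (valid since the densities vanish off $[0,\infty)$) shows $(\mathcal{M}g_m)(s)=(\mathcal{M}f_{\mathcal{D}_{p(m)}(1)})\bigl(1+r(m)(s-1)\bigr)$. Iterating the Mellin convolution theorem down to $f_1=f_{\mathcal{D}_{p(1)}(1)}$ then yields
\[
(\mathcal{M}f_n)(s)\ =\ \prod_{m=1}^{n}(\mathcal{M}f_{\mathcal{D}_{p(m)}(1)})\bigl(1+r(m)(s-1)\bigr).
\]
In particular $(\mathcal{M}f_n)(1)=1$, so the $\ell=0$ (main) term is untouched; and since $|r(m)|\ge 1$, evaluating a factor at $s=1-2\pi i\ell/\log B$ only pushes it farther out along the critical line, so the Riemann--Lebesgue decay in $\ell$ underlying hypothesis~\eqref{eq:summeltransformsnozero} (in its $r$-twisted form, and automatic when $I$ is finite with continuous densities) is preserved.

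Third, with this product formula in hand I would copy the end of the proof of Theorem~\ref{thm:main}: expand the density of $Y_n\bmod 1$ on $[0,1)$ in its Fourier series (equivalently, by Poisson summation), identify the $\ell$-th Fourier coefficient as $\E[X_n^{-2\pi i\ell/\log B}]=(\mathcal{M}f_n)(1-2\pi i\ell/\log B)$, integrate over $[a,b]$, peel off the $\ell=0$ term as $b-a$, and bound the remainder exactly as there using $\bigl|\int_a^b e^{2\pi i\ell y}\,dy\bigr|\le b-a$. By the displayed formula this produces precisely the asserted bound $\bigl|(b-a)\sum_{\ell\neq 0}\prod_{m=1}^n(\mathcal{M}f_{\mathcal{D}_{p(m)}(1)})(1-2\pi i r(m)\ell/\log B)\bigr|$, which tends to $0$ --- so that $X_n$ becomes Benford --- under the $r$-twisted version of \eqref{eq:summeltransformsnozero}.

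I expect the only genuinely new work to be the second step: justifying the twisted Mellin identity, i.e.\ checking absolute convergence of the relevant double integrals so that the change of variables and the interchange of summation and integration are legitimate, and keeping straight where each $r(m)$ lands. After the displayed formula for $(\mathcal{M}f_n)$, the argument is a line-for-line reprise of Theorem~\ref{thm:main} and Corollary~\ref{cor:main}, with each Mellin transform now evaluated at $1-2\pi i r(m)\ell/\log B$ rather than at $1-2\pi i\ell/\log B$.
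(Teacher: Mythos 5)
Your proposal is correct and takes essentially the same route as the paper: the authors prove this corollary by combining Theorem~\ref{thm:main} with Lemma~\ref{lem:randvarr} of Appendix~\ref{app:Xr}, which is precisely your ``twisted Mellin identity'' $(\mel \psi_r)(s) = (\mel \phi)(r(s-1)+1)$, and then rerun the Poisson-summation argument verbatim with the evaluation points shifted in their imaginary parts. Your parenthetical about where the $r$'s land is well taken and is, if anything, handled more carefully than in the paper: under the literal chaining $X_m \sim \mathcal{D}_{p(m)}(X_{m-1}^{r(m-1)})$ the exponents compound on iteration (the $m$-th factor ends up evaluated at $1 - 2\pi i \ell\, r(m)r(m+1)\cdots r(n-1)/\log B$), so the displayed bound matches the reading you adopted rather than the literal one, though in either case the $\ell = 0$ term remains $1$ and the Benford conclusion is unaffected.
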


\begin{rek} In Corollary \ref{cor:generalizemainthm} we could take $r(m) \in \Q - \{0\}$, and the proof would follow similarly. We chose to take $r(m) \in \Z-\{0\}$ as then the claim in Corollary \ref{cor:main} also holds. \end{rek}

We prove our main results in \S\ref{sec:proofsmainresults}, and comment on some alternate interpretations of our results. In particular, we show we may interpret our results in terms of the distribution of products of independent random variables, which has been connected to Benford's law by many authors (see the description and references in \cite{MN} for additional details).

One of our goals in this work is to demonstrate the ease of using the Mellin transform to obtain rapidly converging estimates on deviations from Benford's law. To this end we give some examples in \S\ref{sec:examples} where we only use one distribution in the chain, obtaining very rapidly converging (in $n$) bounds.

The proof of Corollary \ref{cor:generalizemainthm} follows from Theorem \ref{thm:main} and a lemma on the Mellin transform of the density of $X_{m-1}^{r(m-1)}$, which we give in Appendix \ref{app:Xr}. This is but one of many possible generalizations which can readily be studied using our methods.

Our results immediately apply to the situation of hierarchical Bayesian models with each variable depending on just one other variable. Thus we have established a connection between this field and Benford's Law. In particular, we see that when there are many levels then the observed sample values should approximately follow Benford's law, and thus these simple digit frequency tests can be used to test some detailed assumptions about hierarchical Bayesian models. In practice there is excellent agreement with Benford's law even when there are few levels; see the examples in \S\ref{sec:examples} for explicit bounds from uniform and exponential chains as well as examples where such chains may arise. In future work we plan to explore the case of chaining several variables, in order to handle the most general situations; for example, in addition to varying the scale, we will investigate the effects of changing the shape parameters of a distribution (such as the exponent in a Weibull family).


\section{Proof of Theorem \ref{thm:main}}\label{sec:proofsmainresults}

We first prove Theorem \ref{thm:main}, and then show how Corollary \ref{cor:main} follows.

\begin{proof}[Proof of Theorem \ref{thm:main}] We first calculate $f_n$, the density of $X_n$. The basis case is clear, and for the inductive step we note \bea f_n(x_n) \ = \ \int_0^\infty f_{\mathcal{D}_{p(n)}(1)}\left(\frac{x_n}{x_{n-1}}\right) f_{n-1}(x_{n-1}) \frac{dx_{n-1}}{x_{n-1}} \ = \ (f_{\mathcal{D}_{p(n)}(1)} \star f_{n-1})(x_n). \eea By the Mellin convolution theorem and induction we have \bea (\mel f_n)(s) & \ = \ & (\mel (f_{\mathcal{D}_{p(n)}(1)} \star f_{n-1}))(s)\nonumber\\ & \ = \ & (\mel f_{\mathcal{D}_{p(n)}(1)})(s) \cdot (\mel f_{n-1})(s) \nonumber\\ &=& \prod_{m=1}^n (\mel f_{\mathcal{D}_{p(m)}(1)})(s). \eea By the Mellin inversion theorem we find \bea\label{eq:eqfordensityfnmellininvmellin} f_n(x_n) &\ = \ & \left(\mel^{-1}\left( \prod_{m=1}^n (\mel f_{\mathcal{D}_{p(m)}(1)}(\cdot))\right)\right)(x_n). \eea

To investigate the distribution of the digits of $X_n$ (base $B$) it is convenient to make a logarithmic change of variables. Thus set $Y_n = \log_B X_n$. We have \bea {\rm Prob}(Y_n \le y) & \ = \ & {\rm Prob}(X_n \le B^y) \ = \ F_n(B^y). \eea Taking the derivative gives the density of $Y_n$, which we denote by $g_n(y)$: \be g_n(y) \ = \ f_n(B^y) B^y \log B. \ee A standard method to show $X_n$ tends to Benford behavior as $n\to\infty$ is to show that $Y_n \bmod 1$ tends to the uniform distribution on $[0,1]$ (see for example \cite{Dia,MT-B}). This can be seen from the following calculation. The key ingredient is Poisson Summation. While the argument is similar to that in \cite{KonMi}, the resulting expressions are not in the form considered there, and we thus cannot simply quote their results (though a trivial modification of that argument suffices). Let $h_{n,y}(t) = g_{n}(y+t)$. Then
\begin{equation}\label{eq:PoissonSummation}
\sum_{\ell=-\infty}^\infty g_{n}\left(y+\ell\right)\ =\
\sum_{\ell=-\infty}^\infty h_{n,y}(\ell)\ =\ \sum_{\ell=-\infty}^\infty \widehat{h}_{n,y}(\ell)\ =\ \sum_{\ell=-\infty}^\infty e^{2\pi
i y \ell}\widehat{g}_{n}(\ell),
\end{equation} where $\widehat{f}$ denotes the Fourier transform of $f$: \be \widehat{f}(\xi) \ = \ \int_{-\infty}^\infty f(x) e^{-2\pi i x \xi}dx. \ee
Letting $[a,b] \subset [0,1]$, we see that \bea {\rm Prob}(Y_n \bmod 1 \in [a,b]) & \ = \ & \sum_{\ell=-\infty}^\infty \int_{a+\ell}^{b+\ell} g_n(y) dy \nonumber\\ &=& \int_a^b \sum_{\ell=-\infty}^\infty g_n(y+\ell) dy \nonumber\\ &=& \int_a^b \sum_{\ell=-\infty}^\infty e^{2\pi i y \ell} \widehat{g}_n(\ell) dy \nonumber\\ &=& b-a + \err{(b-a) \sum_{\ell\neq 0} |\widehat{g}_n(\ell)|}.\eea Note that since $g_n$ is a probability density, $\widehat{g}_n(0) = 1$. The proof is completed by showing that the sum over $\ell$ tends to zero as $n \to \infty$. We thus need to compute $\widehat{g}_n(\ell)$: \bea \widehat{g}_n(\xi) & \ = \ & \int_{-\infty}^\infty g_n(y) e^{-2\pi i y \xi} dy \nonumber\\ &=& \int_{-\infty}^\infty f_n(B^y) B^y \log B \cdot e^{-2\pi i y \xi} dy \nonumber\\ &=& \int_0^\infty f_n(t) t^{-2\pi i \xi / \log B} dt \nonumber\\ &=& (\mel f_n)\left(1 - \frac{2\pi i \xi}{\log B}\right) \nonumber\\ &=& \prod_{m=1}^n(\mel f_{\mathcal{D}_{p(m)}(1)})\left(1 - \frac{2\pi i \xi}{\log B}\right). \eea Substituting completes the proof. \end{proof}

\begin{rek} If $f$ is a continuous density function, then $(\mel f)\left(1 - \frac{2\pi i \xi}{\log B}\right) < 1$ if $\xi \neq 0$. This is because $f(x)$ is non-negative and \be (\mel f)\left(1 - \frac{2\pi i \xi}{\log B}\right) \ = \ \int_0^\infty f(t) e^{-2\pi i \xi \log_B t} dt; \ee note the integral is clearly at most $\int_0^\infty f(t)dt = 1$ (since $f$ is a density) and in fact is less than this because of the oscillation due to the exponential factor. As $|\xi|$ grows this integral tends to zero rapidly. This follows from our assumption that the Mellin transform is a nice function, and indicates that we have rapid convergence if all the distributions in the chain are equal. An alternate proof of the decay in $|\xi|$ is to note that $(\mel f)\left(1 - \frac{2\pi i \xi}{\log B}\right)$ is the Fourier transform of $g(u) = f(e^u) e^u$ at $\xi/\log B$, and this tends to zero by the Riemann-Lebesgue lemma. \end{rek}

The above proof suggests the following:

\begin{cor}\label{cor:rearranging} Let $\sigma$ be a permutation of $\N$ (thus $\sigma$ is a 1-1 and onto map from $\N$ to $\N$). Assume all conditions in Theorem \ref{thm:main} hold for both some map $p:\N\to\N$ (with the chained random variables $X_m$) and $p\circ \sigma:\N\to\N$ (with the chained random variables $\widetilde{X}_m$). If $\{p(1),\dots,p(n)\} = \{p(\sigma(1)),\dots,p(\sigma(n))\}$ then the density of $X_n$ equals that of $\widetilde{X}_n$. \end{cor}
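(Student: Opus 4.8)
The plan is to exploit the product formula for Mellin transforms that was already established in the proof of Theorem~\ref{thm:main}. From that proof we know that for any admissible map $q:\N\to\N$ the density of the $n$-th chained variable has Mellin transform equal to the product of the individual Mellin transforms:
\be
(\mel f_n)(s) \ = \ \prod_{m=1}^n (\mel f_{\mathcal{D}_{q(m)}(1)})(s).
\ee
Applying this once with $q = p$ (giving the density $f_n$ of $X_n$) and once with $q = p\circ\sigma$ (giving the density $\widetilde f_n$ of $\widetilde X_n$), we obtain
\be
(\mel f_n)(s) \ = \ \prod_{m=1}^n (\mel f_{\mathcal{D}_{p(m)}(1)})(s), \qquad (\mel \widetilde f_n)(s) \ = \ \prod_{m=1}^n (\mel f_{\mathcal{D}_{p(\sigma(m))}(1)})(s).
\ee

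The next step is the observation that multiplication of complex numbers is commutative. Since the hypothesis $\{p(1),\dots,p(n)\} = \{p(\sigma(1)),\dots,p(\sigma(n))\}$ asserts that the two lists of indices agree as multisets (the permutation $\sigma$ merely reorders the first $n$ symbols among themselves, because $\sigma$ restricted to $\{1,\dots,n\}$ must be a bijection onto $\{1,\dots,n\}$ under this condition), the two products above have the same factors in a different order. Hence $(\mel f_n)(s) = (\mel \widetilde f_n)(s)$ for all $s$ in the common strip of analyticity where both sides are defined.

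Finally I would invoke uniqueness of the inverse Mellin transform: two continuous densities on $[0,\infty)$ with the same Mellin transform coincide (this is the Mellin inversion statement quoted in the introduction, applied exactly as in equation~\eqref{eq:eqfordensityfnmellininvmellin}). Therefore $f_n = \widetilde f_n$, i.e.\ the density of $X_n$ equals that of $\widetilde X_n$, which is the claim.

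I do not expect a genuine obstacle here; the result is essentially a restatement of commutativity of the product in equation~\eqref{eq:eqfordensityfnmellininvmellin}. The only mild subtlety worth a sentence in the write-up is justifying that $\sigma$ permutes $\{1,\dots,n\}$ among itself under the multiset hypothesis, so that the two finite products really are rearrangements of one another rather than products over different index sets; and noting that the standing assumption ``all conditions in Theorem~\ref{thm:main} hold for both maps'' guarantees both densities exist, are continuous, and are recoverable from their Mellin transforms, so that the inversion step is legitimate.
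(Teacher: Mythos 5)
Your proof is correct and is essentially the paper's own argument: the paper simply cites the commutativity of the product in the Mellin-transform factorization \eqref{eq:eqfordensityfnmellininvmellin}, and you spell out the same reasoning with the inversion step made explicit. (One tiny quibble: the multiset hypothesis on the values $p(1),\dots,p(n)$ does not force $\sigma$ to map $\{1,\dots,n\}$ bijectively to itself --- e.g.\ take $p$ constant --- but your argument only needs the multiset equality of the $p$-values, so nothing is lost.)
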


\begin{proof} The proof is immediate, and follows from the commutativity of multiplication in the expansion for the density $f_n$ in \eqref{eq:eqfordensityfnmellininvmellin}. \end{proof}

\begin{rek}\label{rek:interpretprodrandvar} The proof of Theorem \ref{thm:main} suggests another interpretation. Namely, the density of $X_n$ is exactly that of the density of $\Xi_1 \cdots \Xi_n$, where the $\Xi_m$ are independent random variables with $\Xi_m\sim \mathcal{D}_{p(m)}(1)$. For example, the density of the random variable $\Xi_1 \cdot \Xi_2$ is given by \be \int_0^\infty f_{\mathcal{D}_{p(2)}(1)}\left(\frac{x}{t}\right) f_{\mathcal{D}_{p(1)}(1)}(t) \frac{dt}{t}  \ee (the generalization to more products is straightforward). To see this, we first calculate the probability that $\Xi_1 \cdot \Xi_2 \in [0,x]$ and then differentiate with respect to $x$. Thus \bea  {\rm Prob}(\Xi_1 \cdot \Xi_2 \in [0,x]) & \ = \ & \int_{t=0}^\infty {\rm Prob}\left(\Xi_2 \in \left[0,\frac{x}t\right]\right) f_{\mathcal{D}_{p(1)}(1)}(t) dt \nonumber\\ &=& \int_{t=0}^\infty F_{\mathcal{D}_{p(2)}(1)}\left(\frac{x}{t}\right) f_{\mathcal{D}_{p(1)}(1)}(t) dt. \eea Differentiating gives the density of $\Xi_1 \cdot \Xi_2$, which equals \be \int_{t=0}^\infty f_{\mathcal{D}_{p(2)}(1)}\left(\frac{x}{t}\right) f_{\mathcal{D}_{p(1)}(1)}(t) \frac{dt}{t}. \ee Thus the convergence to Benford behavior of $X_n$ is equivalent to the convergence to Benford behavior of the product of $n$ identically distributed random variables. This is basically the central limit theorem for random variables modulo $1$ (see for example \cite{MN}), and thus yields an alternate proof of this important result (at least in this special case). Note this also gives another explanation for Corollary \ref{cor:rearranging}. \end{rek}

\begin{proof}[Proof of Corollary \ref{cor:main}] The first part follows immediately from Theorem \ref{thm:main}. For the second claim, we need the Mellin transform of $f_{{\rm Benf},B}$: \bea (\mathcal{M}f_{{\rm Benf},B})(s) & \ = \ & \int_0^\infty f_{{\rm Benf},B}(x) x^s \frac{dx}{x} \nonumber\\ &=& \frac1{\log B} \int_1^B x^{s-2} dx \nonumber\\  &=& \begin{cases} 1 & \text{{\rm if $s=1$}}\\ \frac1{\log B} \frac{B^{s-1} - 1}{s-1} &\text{{\rm if $s\neq 1$}} \end{cases} \eea
Thus \be \twocase{(\mathcal{M}f_{{\rm Benf},B})\left(1 - \frac{2\pi i \ell}{\log B}\right) \ = \ }{1}{if $\ell = 0$}{0}{if $0 \neq \ell \in \Z$.} \ee

Earlier we showed \be (\mathcal{M}f_n)(s) \ = \ \prod_{m=1}^n (\mathcal{M}f_{\mathcal{D}_{p(m)}(1)})(s). \ee We are assuming that $\mathcal{D}_{p(1)}(1) = \mathcal{D}_{{\rm Benf},B}$, and thus when we evaluate at $s = 1-\frac{2\pi i \ell}{\log B}$ with $\ell \in \Z$, the only term which survives is when $\ell=0$. From the proof of Theorem \ref{thm:main} we have \bea {\rm Prob}(Y_n \bmod 1 \in [a,b]) & \ = \ & b-a + \err{(b-a) \sum_{\ell\neq 0} |\widehat{g}_n(\ell)|}, \eea where $Y_n = \log_B X_n$ and \be \widehat{g}_n(\xi) \ = \ (\mathcal{M}f_n)\left(1 - \frac{2\pi i \xi}{\log B}\right); \ee note $X_n$ is Benford base $B$ if and only if $Y_n \bmod 1$ is the uniform distribution. As $\widehat{g}_n(\ell) = 0$ if $0 \neq \ell \in \Z$ (from evaluating the Mellin transform of $f_1$), we obtain that \be {\rm Prob}(Y_n \bmod 1 \in [a,b]) \ = \ b-a; \ee thus $X_n$ is Benford base $B$ for all $n$. \end{proof}

\begin{rek} Note that, unlike the other theorems, we have Benford behavior for a finite value of $n$; there are no error terms. Further, by Corollary \ref{cor:rearranging}, we obtain that $X_n$ is exactly Benford base $B$ if for some $m \le n$ we have $X_m \sim \mathcal{D}_{{\rm Benf},B}(X_{m-1})$. \end{rek}


\section{Examples}\label{sec:examples}

We give two explicit examples of the types of rapidly converging error estimates easily obtainable from these methods. The first example is chaining exponential distributions. Many processes have wait times governed by a Poisson or exponential distribution; thus applications of these results could be to more involved processes where the wait time parameter depends on another process. For our second example we consider chaining uniform distributions. Our street example gives one instance where this could arise, namely when we choose uniformly among options of varying size.

\subsection{Chains of the Exponential Distribution}

Let $X_1 \sim \expd{1}$ (the standard exponential distribution) and $X_m \sim \expd{X_{m-1}}$, and set $Y_m = \log_B X_m$. By Theorem \ref{thm:main} we know that as $n\to\infty$ the distribution of digits of $X_n$ tends to Benford's law; we now bound the error term. We need the following two ingredients: \bi \item the Mellin transform of the standard exponential function (which we denote by $f_{\exp}$) is the Gamma function: \bea \int_0^\infty \exp(-x) x^{s-1} dx \ = \ \Gamma(s). \eea Thus \be (\mel f_{\exp})\left(1-\frac{2\pi i \ell}{\log B}\right) \ = \ \Gamma\left(1 - \frac{2\pi i \ell}{\log B}\right). \ee \item for real $x$, \be \left|\Gamma(1+ix)\right| \ = \ \sqrt{\pi x / \sinh(\pi x)}. \ee \ei Substituting these into Theorem \ref{thm:main} (or Corollary \ref{cor:main}) gives
\bea {\rm Prob}(Y_n \bmod 1 \in [a,b]) & \ = \ &b-a + \err{(b-a) \sum_{\ell=1}^\infty \left(\frac{2\pi^2 \ell / \log B}{\sinh(2\pi^2 \ell/\log B)}\right)^{n/2}},\nonumber\\ \eea or equivalently the probability that the mantissa of $X_n$ is in $[1,s]$ is \be \log_B s + \err{\log_B s \sum_{\ell=1}^\infty \left(\frac{2\pi^2 \ell / \log B}{\sinh(2\pi^2 \ell/\log B)}\right)^{n/2}}.\ee As $\sinh(x)$ grows exponentially in $x$, we see the above sum converges rapidly (i.e., the large $\ell$ terms are immaterial), and the error term decreases rapidly with $n$.

If we take $B=10$ we find the difference between the probability of observing the mantissa of $X_n$ in $[1,s]$ and the Benford probability of $\log_B s$ is at most $.0033\log_B s$ if $n=2$, $.00019\log_B s$ if $n=3$, $.000011\log_B s$ if $n=5$ and
$3.6 \cdot 10^{-13} \log_B s$ if $n=10$. If $B=10$ then for all $\ell \ge 1$ we have $\exp(2\pi^2 \ell / \log 10) - \exp(-2\pi^2 \ell / \log 10)  \ge \frac{10000}{10001} \exp(2\pi^2 \ell / \log 10)$. Thus the error term is bounded by \be \log_{10} s \sum_{\ell =1}^\infty \left( \frac{17.148 \ell}{\exp(8.5726 \ell)} \right)^{n/2} \ \le \ .057^n \log_{10} s. \ee

\subsection{Chains of the Uniform Distribution}

Let $X_1\sim {\rm Unif}(0,k)$ (without loss of generality we may assume $k \in [1,10)$) and set $X_m \sim {\rm Unif}(0,X_{m-1})$. If $P_n(s)$ is
the probability that the base 10 mantissa of $X_n$ is at most $s$, then \bea P_n(s) \ = \ \log_{10} s + \err{\frac{k}{s} \frac{(\log k)^{n-1}}{\Gamma(n)} + \left(\frac1{2.9^n} + \frac{\zeta(n) - 1}{2.7^n} \right) 2 \log_{10}s }. \eea

As the uniform distribution is so easy to work with, we sketch an alternate, more explicit derivation; in fact, it was by generalizing this and the exponential case (which involved properties of the Meijer $G$-function) that led us to the proof of the general case. One can prove by induction that \be f_n(x_n) \ = \ \frac{\log^n(k/x_n)}{k\Gamma(n+1)}. \ee For the base case $n = 2$, since $X_1 \sim \unif{k}$ we have \bea F_{2,k}(x_2) & \ = \ & \int_0^k {\rm Prob}\left(X_2 \le x_2 | X_1 = x_{1}\right) {\rm Prob}(X_1 = x_1) dx_1 \nonumber\\ &=&   \int_0^{x_2} {\rm Prob}(X_2 \le x_2 | X_1 = x_1) \frac{dx_1}{k} +\int_{x_2}^k {\rm Prob}(X_2 \le x_2|X_1 = x_1) \frac{dx_1}{k} \nonumber\\ &=& \int_0^{x_2} \frac{dx_1}{k} + \int_{x_2}^k \frac{x_2}{x_1} \frac{dx_1}{k} \nonumber\\ & = & \frac{x_2}k + \frac{x_2 \log (k/x_2)}{k}. \eea Differentiating yields \be f_{2,k}(x_2) \ = \ \frac{\log(k/x_2)}{k}, \ee which proves the base case. The inductive step follows similarly.

We have \be P_n(s) \ = \ \sum_{\ell=1}^\infty \int_{10^{-\ell}}^{s\cdot 10^{-\ell}} f_{n,k}(x_n)dx_n + \int_1^{\min(s,k)} f_{n,k}(x_n)dx_n. \ee Note for large $n$ the contribution from the second integral is negligible, as the integrand is bounded by $(\log k)^{n-1} / (n-1)!$, which tends rapidly to 0 for fixed $k$ and increasing $n$. We change variables by letting $u = \log(k/x_n)$. Thus $du = -x_n^{-1} dx_n$ or $dx_n = k e^{-u} du$. Thus if we set \be \twocase{g_{n}(u) \ = \ }{\frac{u^{n-1} \ e^{-u}}{\Gamma(n)}}{if $u \ge 0$}{0}{if $u \le 0$,} \ee we find that \bea P_n(s) & \ = \ &   \sum_{\ell=-\infty}^\infty \int_{\log k +\ell \log 10 - \log s}^{\log k + \ell \log 10} g_{n}(u) du \ - \ \int_{\log k - \log s}^{\log k - \log(\min(s,k))} g_{n}(u)du, \nonumber\\  \eea where $g_{n}(u) = 0$ for $u \le 0$ allows us to extend the $\ell$-sum to all integers. The contribution from the second integral is negligible, as it is bounded by $\frac{k}{s} \frac{(\log k)^{n-1}}{\Gamma(n)}$. We evaluate the main term by Poisson Summation.
Thus \bea\label{eq:expPnsunifchain1} P_n(s) & \ = \   & \sum_{\ell=-\infty}^\infty \int_{\log k +\ell \log 10 - \log s}^{\log k + \ell \log 10} g_{n}(u) du + \err{\frac{k}{s} \frac{(\log k)^{n-1}}{\Gamma(n)}} \nonumber\\ &=& \int_{\log k - \log s}^{\log k} \sum_{\ell=-\infty}^\infty g_{n} (u + \ell \log 10) du + \err{\frac{k}{s} \frac{(\log k)^{n-1}}{\Gamma(n)}} \nonumber\\ &=&
\int_{\log_{10} k - \log_{10} s}^{\log_{10} k} \sum_{\ell=-\infty}^\infty g_{n}\left((w+\ell)\log 10\right)dw + \err{\frac{k}{s} \frac{(\log k)^{n-1}}{\Gamma(n)}} \nonumber\\ &=& \int_{\log_{10} k - \log_{10} s}^{\log_{10} k} \sum_{\ell=-\infty}^\infty h_{n,w}\left(\ell\right) \log 10\ dw + \err{\frac{k}{s} \frac{(\log k)^{n-1}}{\Gamma(n)}}, \eea where $h_{n,w}(t) = g_{n}((w+t)/T)$ with $T = 1/\log 10$. We have written our sum like this to facilitate applying the Poisson Summation formula. We have
\begin{equation}\label{eq:Poissum1}
\sum_{\ell=-\infty}^\infty g_{n}\left(\frac{w+\ell}{T}\right)\ =\
\sum_{\ell=-\infty}^\infty h_{n,w}(\ell)\ =\ \sum_{\ell=-\infty}^\infty \widehat{h}_{n,w}(\ell)\ =\ T\sum_{\ell=-\infty}^\infty e^{2\pi
i w \ell}\widehat{g}_{n}(T\ell).
\end{equation}
Recall that $g_n(u)$ is the density function for the Gamma distribution with parameter $n$. Its characteristic function is well-known to be $\E[e^{it}] = (1 - it)^{-n}$; thus its Fourier transform (which is $\E[e^{-2\pi i t}]$) is just $\widehat{g}_n(t) = (1+2\pi i t)^{-n}$. Therefore substituting \eqref{eq:Poissum1} into \eqref{eq:expPnsunifchain1} and splitting off the contribution from $\ell = 0$ yields \bea P_n(s) & \ = \ & \int_{\log_{10} k - \log_{10} s}^{\log_{10} k} \sum_{\ell=-\infty}^\infty e^{2\pi i w \ell} \left(1+\frac{2\pi i \ell}{\log 10}\right)^{-n}dw + \err{\frac{k}{s} \frac{(\log k)^{n-1}}{\Gamma(n)}} \nonumber\\ & = & \log_{10} s + \err{\frac{k}{s} \frac{(\log k)^{n-1}}{\Gamma(n)}} + \err{\sum_{\ell=1}^\infty 2 \log_{10}(s) \cdot \left(1+\frac{2\pi i \ell}{\log 10}\right)^{-n} }. \nonumber\\ \eea

The error term is easily analyzed. The contribution from $\ell=1$ is bounded by $(2.9)^{-n} 2 \log_{10} s$, while the $\ell \ge 2$ terms contribute at most \be \frac{2 \log_{10} s}{2.7^n} \sum_{\ell = 2}^\infty \ell^{-n} \ = \ \frac{2 \log_{10} s}{2.7^n} \cdot \left( \zeta(n) - 1\right), \ee where \be \zeta(s)\ = \ \sum_{\ell=1}^\infty \frac1{\ell^s}, \ \ \ \Re(s) > 1\ee is the Riemann zeta function. Thus \bea P_n(s) & \ = \ & \log_{10} s + \err{\frac{k}{s} \frac{(\log k)^{n-1}}{\Gamma(n)} + \left(\frac1{2.9^n} + \frac{\zeta(n) - 1}{2.7^n} \right) 2 \log_{10}s }.\ \ \ \ \ \eea

\ \\

\section*{Acknowledgements}

We thank Alex Ely Kossovsky for sharing his preprint, Eric Bradlow and Rick Cleary for very detailed readings of the manuscript, Christoph Leuenberger for helpful comments on an earlier draft (especially for suggesting the connection with hierarchical Bayesian models), and the participants of the Workshop on Applications of Benford's Law for many enlightening conversations. The last named author was partly supported by NSF grant DMS0600848.\\

\ \\


\appendix

\section{General Powers of Random Variables}\label{app:Xr}

Our main theorem considers a chain of random variables, where $X_m \sim \mathcal{D}_{p(m)}(X_{m-1})$. Our proof uses properties of the Mellin transform, and shows the equivalence of chaining to products of random variables.

More generally, for each $m$ let $r(m)$ be a non-zero integer. We consider now $X_m \sim \mathcal{D}_{p(m)}(X_{m-1}^{r(m-1)})$. Our theorems generalize immediately to this case as well. The key ingredient is the following lemma.

\begin{lem}\label{lem:randvarr} Let $W$ have density $\phi$, and for $r \in \Z-\{0\}$ let $U = W^r$ have density $\psi_r$. Then \bea \psi_r(u) & \ = \  & \frac1{|r|} \phi\left(u^{1/|r|}\right) u^{\frac{1-r}{r}} \nonumber\\ (\mel \psi_r)(s) &=& (\mel \phi)\left(r(s-1)+1\right). \eea In particular, taking $s = 1 - \frac{2\pi i \ell}{\log B}$ yields \be (\mel \psi_r)\left(1 - \frac{2\pi i \ell}{\log B}\right) \ = \ (\mel \phi)\left(1 - \frac{2\pi i r\ell}{\log B}\right). \ee
\end{lem}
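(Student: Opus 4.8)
The plan is to prove the density formula by a direct change-of-variables argument on the cumulative distribution function, and then obtain the Mellin transform identity either by a direct substitution in the defining integral or, more slickly, by recognizing that the Mellin transform of a density is a moment, so the transform of $\psi_r$ at $s$ records $\E[U^{s-1}] = \E[W^{r(s-1)}]$, which is the transform of $\phi$ at $r(s-1)+1$. The final ``in particular'' claim is then just the substitution $s = 1 - 2\pi i \ell/\log B$ and the arithmetic $r\bigl((1 - 2\pi i \ell/\log B) - 1\bigr) + 1 = 1 - 2\pi i r \ell/\log B$.

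For the density formula, the natural split is into the cases $r > 0$ and $r < 0$, since $w \mapsto w^r$ is increasing on $(0,\infty)$ when $r > 0$ and decreasing when $r < 0$; writing $|r|$ throughout lets us unify the two at the end. First I would compute $\Psi_r(u) = {\rm Prob}(U \le u) = {\rm Prob}(W^r \le u)$ for $u > 0$. When $r > 0$ this is ${\rm Prob}(W \le u^{1/r}) = \Phi(u^{1/r})$; when $r < 0$, $W^r \le u$ is equivalent to $W \ge u^{1/r}$ (inequality flips), giving $1 - \Phi(u^{1/r})$ (up to the boundary point, which has measure zero for a density). Differentiating with respect to $u$ and using the chain rule gives $\psi_r(u) = \phi(u^{1/r}) \cdot \tfrac{1}{|r|} u^{1/r - 1}$; since $u^{1/r} = u^{1/|r|}$ when $r>0$ and $u^{1/r} = u^{-1/|r|}$ when $r < 0$, one checks in both cases that the stated form $\tfrac1{|r|}\phi(u^{1/|r|}) u^{(1-r)/r}$ is recovered — the exponent $1/r - 1 = (1-r)/r$ is what appears, and the argument of $\phi$ is $u^{1/|r|}$ precisely because $W$ is supported on $[0,\infty)$ so only the positive branch of the root is relevant. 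I would note explicitly that support considerations ($\phi$ vanishes off $[0,\infty)$, hence so does $\psi_r$) are what make the single positive root the right choice regardless of the sign of $r$.

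For the Mellin transform, the clean route is:
\begin{equation*}
(\mel \psi_r)(s) \ = \ \int_0^\infty \psi_r(u) u^{s-1}\, du \ = \ \int_0^\infty \phi(w) \, (w^r)^{s-1}\, dw \ = \ \int_0^\infty \phi(w)\, w^{r(s-1)} \, dw \ = \ (\mel \phi)\bigl(r(s-1)+1\bigr),
\end{equation*}
where the second equality is the substitution $u = w^r$ (valid by the change-of-variables computation just performed, or simply by the moment interpretation $(\mel f)(s) = \E[x^{s-1}]$ from the introduction applied to $U = W^r$), and the last equality matches the integrand $w^{r(s-1)} = w^{(r(s-1)+1)-1}$ against the definition of $\mel\phi$. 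One must record the caveat that this manipulation requires the relevant integral to converge absolutely on the line $\Re(s) = 1$, which holds since on that line $|w^{r(s-1)}| = 1$ and $\phi$ is a density; this is exactly the regime we care about.

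The main obstacle — really the only subtlety — is bookkeeping the sign of $r$ in the CDF step: getting the direction of the inequality right for $r < 0$, handling the boundary point $w = u^{1/r}$ correctly (harmless for a density), and then verifying that the two sign cases collapse into the single $|r|$-formula in the statement. Everything after that is routine substitution and the trivial arithmetic identity $r\bigl(-2\pi i \ell/\log B\bigr) = -2\pi i r\ell/\log B$.
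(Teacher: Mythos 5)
Your overall route is the same as the paper's: compute the cumulative distribution function of $U=W^r$, differentiate to get the density, and then obtain the Mellin identity by the substitution $t=u^{1/r}$ (equivalently, via the moment interpretation $(\mel\psi_r)(s)=\E[U^{s-1}]=\E[W^{r(s-1)}]=(\mel\phi)(r(s-1)+1)$, which is sign-independent and is the cleanest justification). The paper in fact only writes out the case $r>0$ and asserts that $r<0$ is similar, so your explicit treatment of the decreasing case, the boundary point, and the absolute-convergence caveat on $\Re(s)=1$ is, if anything, more complete. The final substitution $s=1-2\pi i\ell/\log B$ is trivial arithmetic, as you say.

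One step of your reconciliation is not right, however. Your own chain-rule computation correctly yields $\psi_r(u)=\frac{1}{|r|}\,\phi\!\left(u^{1/r}\right)u^{1/r-1}$ for both signs of $r$, with $u^{1/r}$ (a \emph{negative} power of $u$ when $r<0$) inside $\phi$. The claim that this equals $\phi\!\left(u^{1/|r|}\right)$ ``because only the positive branch of the root is relevant'' is spurious: both $u^{1/r}$ and $u^{1/|r|}$ are positive, but they are different numbers whenever $r<0$ and $u\neq 1$, so no support consideration identifies them. The discrepancy is not yours to explain away --- the displayed density in the lemma statement contains a typo, and the absolute value belongs only in the Jacobian factor $1/|r|$, not in the exponent of the argument of $\phi$; indeed $\frac{1}{|r|}\phi\!\left(u^{1/|r|}\right)u^{(1-r)/r}$ does not even integrate to $1$ when $r<0$. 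Since everything downstream uses only the Mellin identity $(\mel\psi_r)(s)=(\mel\phi)(r(s-1)+1)$, which you establish correctly and independently of this point, the lemma and its applications are unaffected; just delete the false ``positive branch'' argument and record the density with $u^{1/r}$ in the argument of $\phi$.
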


\begin{proof} We calculate the cumulative distribution function of $U$, and then differentiate to get its density. We consider $r > 0$ (the case of $r = -|r| < 0$ is handled similarly). We have \bea \Psi_r(u) \ = \ {\rm Prob}(U \le u) \ = \  {\rm Prob}(W^r \le u) \ = \ {\rm Prob}\left(W \le u^{1/r}\right) \ = \ \Phi(u^{1/r}), \eea where $\Phi$ is the antiderivative of $\phi$. Thus \be \psi_r(u) \ = \ \frac1{r} \phi\left(u^{1/r}\right) u^{\frac{1-r}{r}}. \ee We now calculate the Mellin transform, again considering just the case of $r > 0$, as the other case follows similarly. \bea (\mel \psi_r)(s) & \ = \ & \int_0^\infty \psi_r(u) u^s \frac{du}{u} \nonumber\\ &=& \int_0^\infty \frac1{r} \phi\left(u^{1/r}\right) u^{\frac{1-r}{r}} u^s \frac{du}{u} \nonumber\\ &=& \int_0^\infty \phi(t) t^{r(s-1)} dt \nonumber\\ &=& \int_0^\infty \phi(t) t^{r(s-1)+1} \frac{dt}{t} \ = \ (\mel \phi)\left(r(s-1)+1\right);   \eea the remaining claim follows by direct substitution.
\end{proof}

\begin{rek} For us, one of the most important consequences of Lemma \ref{lem:randvarr} is that when we evaluate the resulting Mellin transform at $1 - \frac{2\pi i \ell}{\log B}$ we end up with the Mellin transform of another density evaluated at $1 - \frac{2\pi i r(m-1) \ell}{\log B}$.  Thus our arguments from before follow with almost no change; it is essential that the effect of replacing $X_{m-1}$ with $X_{m-1}^{r(m-1)}$ is only to change the \emph{imaginary} part of where we evaluate. We could take $r(m-1) \in \Q-\{0\}$ or even $\R - \{0\}$ and the argument would still hold (but now the second part of Corollary \ref{cor:main} would fail).
\end{rek}



\bigskip

\ \\


\begin{thebibliography}{KonSim}

\bibitem[Ben]{Ben} \newblock F. Benford, \emph{The law of anomalous numbers},
Proceedings of the American Philosophical Society \textbf{78}
(1938), 551-572.

\bibitem[BBH]{BBH} \newblock A. Berger, Leonid A. Bunimovich and T. Hill,
\emph{One-dimensional dynamical systems and Benford's Law}, Trans.
Amer. Math. Soc. \textbf{357}  (2005),  no. 1, 197-219.

\bibitem[BH]{BH}
A. Berger and T. Hill, \emph{Newton's method obeys Benford's law},
The Amer. Math. Monthly \textbf{114} (2007), no. 7, 588-601.

\bibitem[Ber]{Ber}
J. O. Berger, \emph{Statistical Decision Theory and Bayesian Analysis}, second edition, Springer Series in Statistics, 1985.

\bibitem[BrDu]{BrDu}
\newblock J. Brown and R. Duncan, \emph{Modulo one uniform distribution of
the sequence of logarithms of certain recursive sequences},
Fibonacci Quarterly \textbf{8} (1970) 482--486.

\bibitem[CLTF]{CLTF}
\newblock E. Costas, V. L$\acute{o}$pez-Rodas, F. J. Toro and A. Flores-Moya, \emph{The number of cells in colonies of the cyanobacterium  Microcystis aeruginosa satisfies Benford's law}, Acquatic Botany \textbf{89} (2008), DOI 10.1016/j.aquabot.2008.03.011.



\bibitem[Dia]{Dia} \newblock P. Diaconis, \emph{The distribution of leading
digits and uniform distribution mod 1}, Ann. Probab. \textbf{5}
(1979), 72-81.


\bibitem[Ha]{Ha}
R. W. Hamming, \emph{On the distribution of numbers}, Bell Syst.
Tech. J. \textbf{49} (1970), 1609-1625.

\bibitem[Hi1]{Hi1} \newblock T. Hill, \emph{The first-digit phenomenon},
American Scientists \textbf{86}  (1996), 358-363.

\bibitem[Hi2]{Hi2} \newblock T. Hill, \emph{A statistical derivation of the
significant-digit law}, Statistical Science \textbf{10} (1996),
354-363.

\bibitem[Hu]{Hu}
W. Hurlimann, \emph{Benford's Law from 1881 to 2006: a
bibliography}, \texttt{http://arxiv.org/abs/math/0607168}.

\bibitem[Knu]{Knu} \newblock D. Knuth, \emph{The Art of Computer
Programming, Volume 2: Seminumerical Algorithms}, Addison-Wesley,
third edition, 1997. 

\bibitem[KonMi]{KonMi}
A. Kontorovich and S. J. Miller, \emph{Benford's law, values of
$L$-functions and the $3x+1$ problem}, Acta Arith. \textbf{120}
(2005), 269--297.

\bibitem[Ko]{Ko}
A. E. Kossovsky, \emph{Towards a better understanding of the leading digits phenomena}, preprint. \texttt{http://arxiv.org/abs/math/0612627}

\bibitem[LS]{LS}
\newblock J. Lagarias and K. Soundararajan, \emph{Benford's Law for the $3x+1$
Function},  J. London Math. Soc. (2)  \textbf{74}  (2006),  no. 2, 289--303.

\bibitem[Ley]{Ley}
E. Ley, \emph{On the peculiar distribution of the U.S. Stock Indices
Digits}, The American Statistician \textbf{50} (1996), no. 4,
311--313.

\bibitem[Me]{Me}
\newblock W. Mebane, \emph{Election Forensics: The Second-digit Benford's Law Test and Recent American Presidential Elections}, Election Fraud Conference, Salt Lake City, Utah, September 29--30, 2006. \texttt{http://www.umich.edu/$\sim$wmebane/fraud06.pdf}.

\bibitem[MN]{MN}
\newblock S. J. Miller and M. Nigrini, \emph{The Modulo $1$ Central Limit Theorem and Benford's Law for Products}, International Journal of Algebra \textbf{2} (2008), no. 3, 119--130.

\bibitem[MT-B]{MT-B}
\newblock S. J. Miller and R. Takloo-Bighash, \emph{An Invitation to Modern Number Theory}, Princeton University Press, Princeton, NJ, 2006.

\bibitem[New]{New} \newblock S. Newcomb, \emph{Note on the frequency of use
of the different digits in natural numbers}, Amer. J. Math.
\textbf{4} (1881), 39-40.

\bibitem[Nig1]{Nig1}
\newblock M. Nigrini, \emph{Digital Analysis and the
Reduction of Auditor Litigation Risk}. Pages 69--81 in
\emph{Proceedings of the 1996 Deloitte \& Touche / University of
Kansas Symposium on Auditing Problems}, ed. M. Ettredge,
University of Kansas, Lawrence, KS, 1996.

\bibitem[Nig2]{Nig2}
\newblock M. Nigrini, \emph{The Use of Benford's Law as an Aid in
Analytical Procedures}, Auditing: A Journal of Practice \& Theory,
\textbf{16} (1997), no. 2, 52--67.

\bibitem[NM]{NM}
\newblock M. Nigrini and S. J. Miller, \emph{Benford's Law applied to hydrology data - results and relevance to other
geophysical data}, Mathematical
Geology \textbf{39} (2007), no. 5, 469--490.

\bibitem[P]{P}
Jean-Christophe Pain, \emph{Benford's law and complex atomic spectra}, Phys. Rev. E \textbf{77}, 012102 (2008).

\bibitem[Pa]{Pa}
S. J. Patterson, \emph{An introduction to the theory of the Riemann Zeta-Function}, Cambridge studies in advanced mathematics 14, Cambridge University Press, 1995.


\bibitem[Rai]{Rai}
R. A. Raimi, \emph{The first digit problem}, Amer. Math. Monthly
\textbf{83} (1976), no. 7, 521--538.

\bibitem[SS]{SS}
E. Stein and R. Shakarchi, \emph{Fourier Analysis: An
Introduction}, Princeton University Press, Princeton, NJ, 2003.


\end{thebibliography}
\end{document}